
\documentclass{amsart}

\usepackage{amssymb}

\usepackage{graphicx}


\usepackage{pictex}
\usepackage{amsmath,amsthm,amssymb,pictex,verbatim, cite}


\newtheorem{theorem}{Theorem}[section]
\newtheorem{lemma}[theorem]{Lemma}

\newtheorem{corollary}[theorem]{Corollary}

\theoremstyle{remark}

\newtheorem{example}[theorem]{Example}

\numberwithin{equation}{section}

\begin{document}

\title{Voting for Committees in Agreeable Societies}

\author{Matt Davis}
\address{Department of Mathematics, Muskingum University, New Concord, OH  43762}
\email{mattd@muskingum.edu}

\author{Michael E. Orrison}
\address{Department of Mathematics, Harvey Mudd College, Claremont, CA  91711}
\email{orrison@math.hmc.edu}

\author{Francis Edward Su}
\address{Department of Mathematics, Harvey Mudd College, Claremont, CA  91711}
\email{su@math.hmc.edu}
\thanks{Francis Edward Su was supported in part by NSF Grant DMS-1002938.}

\subjclass[2010]{Primary 91B12; Secondary 05C62}

\date{}

\begin{abstract} 
We examine the following voting situation. A committee of $k$ people is to be formed from a pool of $n$ candidates. The voters selecting the committee will submit a list of $j$ candidates that they would prefer to be on the committee. We assume that $j \le k \le n$. For a chosen committee, a given voter is said to be satisfied by that committee if her submitted list of $j$ candidates is a subset of that committee. We examine how popular is the most popular committee.  In particular, we show there is always a committee that satisfies a certain fraction of the voters and examine what characteristics of the voter data will increase that fraction.
\end{abstract}

\maketitle

\section{Introduction}

The goal of this article is to examine the following voting situation: from a pool of $n$ candidates, a committee of size $k$ is to be formed.  Every voter will submit an unordered list of $j$ candidates that they would prefer be on the committee.  We will assume throughout that $j \leq k < n$.   

Once this voter data is collected, a committee will be chosen via some procedure.  One way to select a winner of the election is to look for a most ``popular'' committee: a committee with the highest proportion of voter ``approval''.  (In case of ties, there may be more than one most popular committee.)  We must make this notion precise.

Let us say that a voter \emph{approves} a committee if it contains the list of $j$ candidates that the voter submitted.  Note that a voter may approve several committees, since several different $k$-element subsets will contain a given $j$-element subset if $j < k$.  

We wish to answer questions regarding whether it is always possible, assuming certain conditions on the voters' preferences, to find a committee approved by a certain proportion of voters.  Phrased another way, we can ask how popular is a most popular (most-approved) committee?  Can we make any minimal guarantees for the approval proportion of a most popular committee, given some geometric condition on the voter preferences?

Such questions have been addressed in voting contexts where each person is choosing just one candidate.  Berg et~al.~\cite{berg} initiated the study of \emph{agreeability conditions} for voting preferences over a linear political spectrum.  In this situation, candidates are chosen from a line of possibilities, and each person is allowed to specify an interval of candidates that they find acceptable.  In other words, their \emph{approval set} is an interval in the real line.  They call a society \emph{super-agreeable} if every pair of voters has a mutually agreed candidate (i.e., their approval sets overlap) and they also consider other \emph{agreeability} conditions that specify ways in which voter preferences, when taken in small subsets, are locally similar.  They show how these produce global conclusions about how popular a most popular candidate must be, and give minimal guarantees for how many approval sets a most-approved candidate must lie in.

These ideas have been generalized to other kinds of geometric spaces that can be viewed as political spectra---circles \cite{hardin}, trees \cite{fletcher}, multi-dimensional spaces \cite{eschenfeldt}---as well as other kinds of approval sets \cite{nyman} and agreeability conditions \cite{carlson-oneill}.

This paper considers similar questions in a new context: voting for committees.  (See  \cite{brams-etal, fishburn, fishburn-pekec, gehrlein, kilgour-marshall, ratliff-2003, ratliff-2006} for several different approaches to the study of voting for committees.)  The set of $n$ candidates is finite, and not viewed as a geometric space.  However, voters are now allowed to specify a list of size $j$, and there is a geometric notion of how close two lists may be.  The resulting geometric space of lists is called a \emph{Johnson graph} $J(n,j)$ where each point is a $j$-element subset of an $n$-element set (see \cite{brouwer-etal} for more on Johnson graphs).  

Unlike the context of Berg~et~al.~\cite{berg}, here the space in which preferences are expressed is \emph{different} from the space of potential outcomes of an election, which are the $k$-element subsets of the candidates, and which can be thought of in terms of the graph $J(n,k)$.  In particular, each voter's list in $J(n,j)$ produces an approval set in $J(n,k)$ consisting of all committees that contain the list.  In this paper, we ask: what is the minimal guarantee for the popularity of a most popular committee in $J(n,k)$, and if we place conditions on how similar, or ``agreeable'', the submitted lists are in $J(n,j)$, how does that change the guarantee? 

We first demonstrate, in Theorem \ref{firstprop}, a sharp lower bound for any voter distribution.  We then show, in Theorem \ref{theorem}, that if all the votes lie within a ``ball'' in the space of lists, then the bound we can guarantee improves.  We conclude with some extensions and open questions.

\section{Definitions}

Let us identify the candidates with the elements of the set $[n] = \{1,2, \ldots n\}$.  Any $j$-element subset of 
$[n]$ will be called a \emph{list}.  (Note that a list here is just a set, and is unordered.) 
Any $k$-element subset of $[n]$ will be called a \textit{committee}.

We choose to think of given voter data as a probability distribution $P$ on the set of lists, which we call the \emph{voter distribution}.  In particular, this distribution will specify for each list $\ell$ the \emph{voting proportion} $P(\ell)$---this is the fraction of voters who submitted the list $\ell$.  
We shall use the same notation to describe the probability of a collection of lists, so that $P(\mathcal{A})$ describes the likelihood that a voter chose one of the lists in the collection $\mathcal{A}$.

If $C$ is a committee, let  
$$\pi_{P}( C) = \sum_{\ell \subseteq C, |\ell|=j} P(\ell)$$
denote the \emph{approval proportion} of committee $C$ with respect to voter data $P$: this is the fraction of voters that approve a given committee if the voter distribution on the set of all lists is $P$.  We will write simply $\pi( C)$ if $P$ is understood.

\begin{example} Assume that candidates 1 through 7 are being considered for a 4-person committee, and voters are asked to submit lists of three candidates. Define $P$ so that $P(\{1,2,3\}) = 7/15$, while $P(\ell) = 2/15$ for the lists $\{4,5,6\}$, $\{4,5,7\}$, $\{4,6,7\}$, and $\{5,6,7\}$. (Then $P(\ell) = 0$ for all other lists.) For this choice of $P$, the list $\{1,2,3\}$ has the highest voting proportion, but by inspection, we see that $\pi_{P}(\{4,5,6,7\}) = 8/15$, which is a higher approval proportion than any other committee.

This example shows that a most popular committee is not necessarily populated by candidates who are most popular. Even though each of candidates 1 through 3 appears in $7/15$ of the submitted lists, and candidates 4 through 7 each appear in only $6/15$ of the submitted lists, the committee $\{4,5,6,7\}$ is still the most popular.
\end{example}

Our first observation is straightforward; it gives a minimum popularity for a most popular committee.  The proof also illustrates a technique that we will exploit again later---that a most popular committee has at least as large an approval proportion than the \emph{average} approval proportion over all committees.

\begin{theorem}
\label{firstprop}
For any given voter distribution $P$, there exists a committee $\widehat C$ with approval proportion satisfying
$$ \pi(\widehat C ) \geq \frac{\binom{k}{j}}{\binom{n}{j}}.$$
\end{theorem}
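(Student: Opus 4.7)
The plan is to follow the averaging hint given in the paragraph preceding the statement: show that the average of $\pi(C)$ over all $\binom{n}{k}$ committees equals exactly $\binom{k}{j}/\binom{n}{j}$, and then invoke the trivial fact that some committee must achieve at least the average.

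More concretely, I would start by writing
\[
\frac{1}{\binom{n}{k}}\sum_{C} \pi(C) = \frac{1}{\binom{n}{k}}\sum_{C}\sum_{\ell \subseteq C,\,|\ell|=j} P(\ell),
\]
where the outer sum ranges over all $k$-element committees $C \subseteq [n]$. The key step is to swap the order of summation so that the outer sum runs over lists $\ell$ and the inner sum counts the number of committees $C$ containing a given list $\ell$. A fixed $j$-element set is contained in exactly $\binom{n-j}{k-j}$ of the $k$-element subsets of $[n]$, so the double sum collapses to $\binom{n-j}{k-j}\sum_\ell P(\ell) = \binom{n-j}{k-j}$, using that $P$ is a probability distribution on lists.

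Dividing by $\binom{n}{k}$ and simplifying, a short factorial calculation shows
\[
\frac{\binom{n-j}{k-j}}{\binom{n}{k}} = \frac{k!\,(n-j)!}{(k-j)!\,n!} = \frac{\binom{k}{j}}{\binom{n}{j}},
\]
which is exactly the claimed bound for the average. Since the maximum of a finite collection of real numbers is at least its average, there must exist a committee $\widehat{C}$ with $\pi(\widehat{C}) \geq \binom{k}{j}/\binom{n}{j}$.

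The argument is entirely routine; there is no real obstacle, only the bookkeeping of checking that the two binomial expressions agree. The only mildly subtle point is making sure the swap of summation is done correctly, i.e., that each pair $(\ell, C)$ with $\ell \subseteq C$ is counted exactly once on both sides, so that the counting identity $|\{C : \ell \subseteq C, |C|=k\}| = \binom{n-j}{k-j}$ applies uniformly to every list $\ell$.
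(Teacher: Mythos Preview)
Your proposal is correct and is essentially identical to the paper's own proof: both compute the sum of $\pi(C)$ over all committees by swapping the order of summation, use that each list lies in $\binom{n-j}{k-j}$ committees and that $P$ is a probability distribution, and then simplify $\binom{n-j}{k-j}/\binom{n}{k}$ to $\binom{k}{j}/\binom{n}{j}$ before invoking the pigeonhole/averaging principle.
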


\begin{proof}
If we sum the approval proportions over all $\binom{n}{k}$ possible committees for $C$, we obtain
\begin{eqnarray*}
\sum_{C \subseteq [n], |C|=k} \pi( C) 
&=& \sum_{C \subseteq [n], |C|=k}   \left( \sum_{\ell \subseteq C, |\ell|=j} P(\ell) \right) \\
&=& \binom{n-j}{k-j} \sum_{\ell \subseteq [n], |\ell| = j} P(\ell) \\
&=& \binom{n-j}{k-j}.
\end{eqnarray*}
The first equality follows from the definition of the approval proportion.
The second equality follows by noting that each list $\ell$ will be satisfied by the $\binom{n-j}{k-j}$ committees that include $\ell$ as a subset; hence the term $P(\ell)$ will appear $\binom{n-j}{k-j}$ times if we sum over lists, then committees.
And the third equality is a consequence of $P$ being a probability distribution.

Since there are $\binom{n}{k}$ terms in the sum, at least one committee $\widehat{C}$ must have approval proportion $\pi(\widehat{C})$ at least as large as the average:
$$
\pi(\widehat{C}) \geq \frac{\binom{n-j}{k-j}}{\binom{n}{k}} 
= \frac{(n-j)!k!}{(k-j)!n!} 
=  \frac{\binom{k}{j}}{\binom{n}{j}}.
$$
\end{proof}

Notice that this bound is the best possible, since it is achieved by, for example, the uniform distribution on the set of all lists.  

\section{Votes Within a Ball}

We now investigate what characteristics of the voter distribution might improve the popularity of a most approved committee.  For instance, in Theorem \ref{firstprop}, the worst possible case is one in which the distribution is ``spread out'' over all possible lists uniformly.  But if the voters are ``agreeable'' in some sense then we might be able to guarantee a committee that will be approved by more voters.  As an example, if we knew that all of the voters' lists were ``close'', then this would suggest that the approval proportion of a most popular committee should be higher than the bound in Theorem \ref{firstprop}.  How can we can describe the ``closeness'' of votes?  

Consider the \emph{Johnson graph} $J(n,j)$, whose vertices are the $j$-sets (i.e., $j$-element subsets) of $[n]$, with two $j$-sets $v$ and $w$ adjacent if they have exactly $j-1$ elements in common.  In our context, we think of the Johnson graph as the space of possible lists, with two lists adjacent if they differ by exactly one candidate.  Then note that the graph distance $d$ in $J(n,j)$  has a nice interpretation: for two lists $v$ and $w$, the distance $d(v,w)=m$ if and only if $|v \cap w| = j-m$, i.e., $v$ and $w$ differ in exactly $m$ places.  This graph has diameter
$D = \textrm{min}(j,n-j)$, which is achieved when $j$-sets are as disjoint as possible.

\begin{figure}
\includegraphics{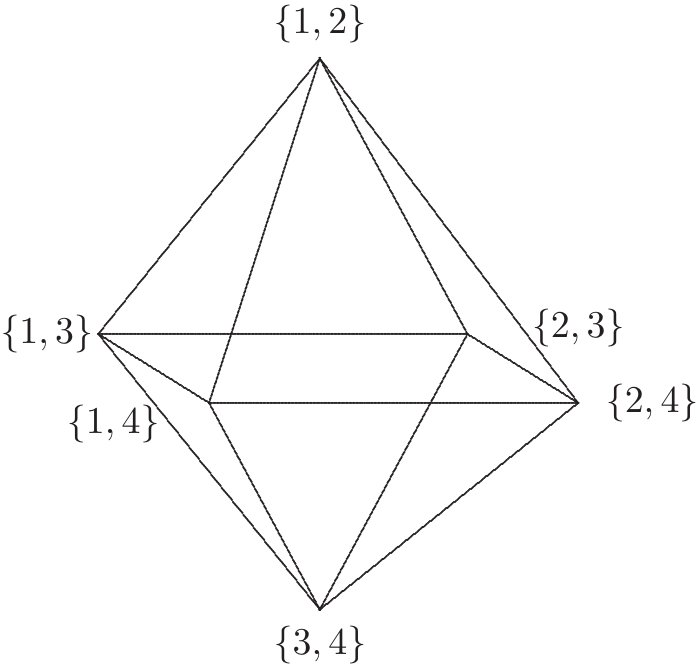}
\caption{The Johnson graph $J(4,2)$.} 
\label{fig:JohnsonGraph} 
\end{figure}


We will explore how to improve the bound in Theorem \ref{firstprop} if all the submitted lists are within some ``ball'' in the Johnson graph about a central fixed list $v$.  Given a non-negative integer $r$, we define the \textit{ball of radius $r$ around $v$} to be 
$$
B_{r}(v) = \{ w \textrm{ a vertex of } J(n,j) \, \mid \, d(v,w) \leq r\}.
$$
Then $B_{r}(v)$ is the entire Johnson graph if and only if $r \geq D$.  In what follows, we assume that $j >1$ so that the Johnson graph has diameter at least 2, and neighborhoods of vertices can be proper subsets of the set of vertices.

Figure \ref{fig:JohnsonGraph}, though a small example, gives some helpful intuition concerning the structure of Johnson graphs. We will examine \emph{rings} of lists in $J(n,j)$ that are equidistant from $v$. 
More formally, define the \emph{ring} $R_{r}(v)$ by
 $$
 R_{r}(v) = \{ x \in J(n,j) \, | \, d(x,v) = r \},
 $$
for $0 \leq r \leq D$, which may be thought of as the ring of radius $r$ about the vertex $v$.  We note that if $d(x,v)=r$, then the list $x$ must have $j-r$ candidates in common with the list $v$, and $r$ candidates different from $v$.  Thus
\begin{equation}
\label{Rrv}
|R_{r}(v)| = \binom{j}{j-r} \binom{n-j}{r}.
\end{equation}
For example, in Figure \ref{fig:JohnsonGraph}, if $v = \{1,2\}$, the rings are:
\begin{eqnarray*}
R_{0}(v) &=& \{ \{1,2\} \},\\
R_{1}(v) &=& \{ \{1,3\}, \{1,4\}, \{2,3\}, \{2,4\}\},\\
R_{2}(v) &=& \{ \{3,4\} \}.
\end{eqnarray*}

\begin{example} \label{ex:betterbound} Let $n=6$, $k=4$, and $j=3$.  Theorem \ref{firstprop} implies that for any $P$, there will always be a committee $\widehat{C}$ satisfying $\pi(\widehat{C}) \geq \frac{1}{5}$.  Now let $v = \{1,2,3\}$ and assume that $P$ is supported on the ball $B_{1}(v)$, with $p = P(v)$. Then the complementary probability is \[ 1-p = \sum_{w \in R_{1}(v)} P(w) = \sum_{i=4}^{6} P(\{1,2,i\}) + P(\{1,3,i\}) + P(\{2,3,i\}). \] Hence, since one of the summands must be at least as large as the average, 
\[ P(\{1,2,i\}) + P(\{1,3,i\}) + P(\{2,3,i\}) \geq \frac{1}{3}(1-p),\]
for some $i$ with $4 \leq i \leq 6$.  Then for $C = \{1,2,3,i\}$, we have 
\begin{equation} 
\pi_{P}(C) = P(v) + P(\{1,2,i\}) + P(\{1,3,i\}) + P(\{2,3,i\}) \geq p + \frac{1}{3}(1-p) \geq \frac{1}{3}. \label{eq:thmexample} 
\end{equation} 
In fact, if $p=0$ and $P$ is distributed uniformly on $R_{1}(v)$, then the inequality in \eqref{eq:thmexample} is equality, and $\frac{1}{3}$ is the highest approval proportion for any committee. So in this case, with the added assumption that $P$ is supported on $B_{1}(v)$, we can improve the lower bound on $\pi_{P}(\hat{C})$ to $1/3$. (This will also be a consequence of Theorem \ref{theorem} below.)
\end{example}

Rings about a vertex $v$ always follow the same pattern: 
as $r$ increases from 0 to $D$, the rings increase in size monotonically for a time, 
then decrease monotonically for a time. In particular, we can show:
\begin{lemma}
$ |R_{r}(v)| \leq |R_{r+1}(v)|$ holds if and only if $$\quad r \leq \frac{nj-j^{2}-1}{n+2}.$$
\end{lemma}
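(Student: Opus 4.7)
The plan is to reduce the inequality to a comparison of consecutive binomial-coefficient ratios, then solve the resulting linear inequality in $r$. The key observation is that formula \eqref{Rrv} can be rewritten as
\[
|R_r(v)| = \binom{j}{r}\binom{n-j}{r},
\]
since $\binom{j}{j-r}=\binom{j}{r}$. This symmetric form is more convenient for computing ratios.

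First I would form the ratio $|R_{r+1}(v)|/|R_r(v)|$. Applying the standard identity $\binom{a}{r+1}/\binom{a}{r} = (a-r)/(r+1)$ to both factors gives
\[
\frac{|R_{r+1}(v)|}{|R_r(v)|} \;=\; \frac{(j-r)(n-j-r)}{(r+1)^2}.
\]
For $0 \le r < D = \min(j,n-j)$, both the numerator and denominator here are strictly positive, so the inequality $|R_r(v)| \le |R_{r+1}(v)|$ is equivalent to $(j-r)(n-j-r) \ge (r+1)^2$.

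Next I would carry out the algebraic simplification. Expanding both sides, the $r^2$ terms cancel, and one is left with
\[
j(n-j) - nr \;\ge\; 2r + 1,
\]
i.e., $nj - j^2 - 1 \ge (n+2)r$. Dividing by the positive quantity $n+2$ yields exactly the claimed bound $r \le (nj-j^2-1)/(n+2)$. Since every step is an equivalence (we only divided by positive quantities), this gives the "if and only if" direction.

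The only subtlety worth noting is the boundary: the statement implicitly concerns $r$ in the range $0 \le r \le D-1$ (otherwise $R_{r+1}(v)$ is empty), and one should check that this range is compatible with the stated bound. Since the bound $(nj-j^2-1)/(n+2)$ is at most $D-1$ in the relevant regimes, the inequality correctly selects the "increasing" portion of the sequence $|R_r(v)|$. There is no real obstacle here; the result is a routine but satisfying piece of algebra on binomial ratios.
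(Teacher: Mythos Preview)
Your proof is correct and follows essentially the same route as the paper's: both reduce the inequality $|R_r(v)| \le |R_{r+1}(v)|$ to the equivalent condition $(j-r)(n-j-r) \ge (r+1)^2$ and then simplify to the stated linear bound on $r$. The paper's proof is a one-line sketch, so your version simply supplies the details it omits.
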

\begin{proof}
A straightforward calculation using (\ref{Rrv}) shows that both conditions are equivalent to the condition 
$(j-r)(n-j-r) \geq (r+1)(r+1)$.
\end{proof}
It is helpful to visualize the Johnson graph as points on a sphere, with the list $v$ at the north pole, and the rings $R_{r}(v)$ drawn as latitude lines. (This visualization explains our terminology and captures the behavior of the relative sizes of the rings.)

Suppose the voter distribution $P$ is supported on a neighborhood $B_{\rho}(v)$ of radius $\rho$.  To find a minimal guaranteed approval proportion, we are interested in the worst-case scenario -- the distribution that leads to a worst possible ``best'' committee. Intuitively, we would expect this worst-case scenario to occur when the voters' lists are spread out as far away from $v$ as possible, i.e., on the ring $R_{\rho}(v)$. However, if $\rho$ is too large, the outermost ring $R_{\rho}(v)$ will be too small, and votes on that ring will be too close together. In that case, the worst-case scenario will result from a more complicated distribution of voters. 

\begin{example}
 \label{ex:ringexample} 
Let $n=6$, $k=4$, and $j=3$, and let $v = \{1,2,3\}$. In Example \ref{ex:betterbound}, we showed that if the voter distribution $P$ is supported on $B_{1}(v)$, then there must be a committee $C$ with $\pi_{P}(C) \geq 1/3$. As stated there, if $P$ is in fact the uniform distribution on $R_{1}(v)$, then every committee $C$ that contains all of $v$ has $\pi_{P}(C) = 1/3$, and $1/3$ is the maximum approval proportion in this case. Moreover, if $P(v) > 0$, then the discussion in Example \ref{ex:betterbound} shows that one of the committees that contains $v$ will have an approval proportion strictly larger than $1/3$. Thus the ``worst-case'' scenario occurs only when $P$ is supported on the outermost ring $R_{1}(v)$. 

Conversely, if $P$ were supported on $B_{2}(v)$, then the worst-case scenario cannot result from $P$ being supported only on $R_{2}(v)$. Because $R_{2}(v) = R_{1}(\{4,5,6\})$, the previous example shows that at least one committee $C$ would have $\pi_{P}(C) \geq 1/3$ if $P$ were supported on $R_{2}(v)$. However, if $P$ is the uniform distribution on $B_{2}(v)$, then it may be checked that the largest approval proportion for any committee is $4/19$. In general, we expect that the worst-case scenario will occur when $P$ is supported on $R_{\rho}(v)$ only when $\rho$ is small. 
\end{example}

\section{Concentric Voter Distributions}

We begin by showing that the rings, aside from being helpful in visualizing the neighborhood $B_{\rho}(v)$, are also quite important to our analysis.  We define a \textit{concentric distribution centered at $v$} to be a distribution $P$ such that, for any $r$ and for any two lists $\ell_{1}$ and $\ell_{2}$ in $R_{r}(v)$, the proportion of voters choosing $\ell_{1}$ and $\ell_{2}$ are the same: $P(\ell_{1})=P(\ell_{2})$.  That is, the votes for lists in a given ring are distributed uniformly on that ring.  

Let $w_r$ denote the \emph{weight} of the ring $R_{r}(v)$: it is the sum of the voting proportions of elements of the ring $R_{r}(v)$.  
Then a concentric distribution is completely determined by the weights $w_r$, and these weights sum to 1.

\begin{lemma} \label{lem:ringlemma} For any voting data $P$, let $P^{\circ}$ represent the concentric distribution centered at $v$ that has the same weights as $P$.   Then if $C$ is a committee with the highest approval proportion $\pi_{P}(C )$, and $C^{\circ}$ is a committee with the highest approval proportion $\pi_{P^{\circ}}(C^{\circ})$, then 
$$ \pi_{P}(C ) \geq \pi_{P^{\circ}}(C^{\circ} ).$$
\end{lemma}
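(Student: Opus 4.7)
The plan is to prove this via a symmetrization (averaging) argument using the natural symmetry group of the ring decomposition. Let $G = \mathrm{Stab}(v) \subseteq S_n$ be the subgroup of permutations of $[n]$ that send $v$ to itself; this is the product $S_j \times S_{n-j}$ acting on the candidates in $v$ and the candidates in $[n]\setminus v$ independently. The key observations will be: (i) each ring $R_r(v)$ is $G$-invariant (since $|\sigma(\ell) \cap v| = |\ell \cap \sigma^{-1}(v)| = |\ell \cap v|$ for $\sigma \in G$), and (ii) $G$ acts transitively on each ring $R_r(v)$ (given two lists sharing $j-r$ elements with $v$, one can permute the common elements in $v$ and the non-common elements in $[n]\setminus v$ to map one to the other).

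First I would define, for each $\sigma \in G$, the translated distribution $P_\sigma(\ell) = P(\sigma^{-1}\ell)$. Since $\sigma$ preserves each ring, $P_\sigma$ has the same ring weights as $P$. Next I would verify that
\[
P^\circ = \frac{1}{|G|} \sum_{\sigma \in G} P_\sigma,
\]
which follows from transitivity of $G$ on each ring: the average assigns to $\ell \in R_r(v)$ the value $w_r / |R_r(v)|$, matching the concentric distribution. Also, for any committee $C$, one checks directly from the definition of $\pi$ that $\pi_{P_\sigma}(C) = \pi_P(\sigma^{-1} C)$, because $\ell \subseteq C$ iff $\sigma^{-1}\ell \subseteq \sigma^{-1}C$.

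Then I would apply these facts to $C^\circ$, a committee maximizing $\pi_{P^\circ}$. By linearity of $\pi$ in the first argument,
\[
\pi_{P^\circ}(C^\circ) = \frac{1}{|G|} \sum_{\sigma \in G} \pi_{P_\sigma}(C^\circ) = \frac{1}{|G|} \sum_{\sigma \in G} \pi_P(\sigma^{-1} C^\circ).
\]
Since $C$ is a committee of highest approval proportion under $P$, each term $\pi_P(\sigma^{-1} C^\circ) \leq \pi_P(C)$. Averaging yields $\pi_{P^\circ}(C^\circ) \leq \pi_P(C)$, which is the desired inequality.

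The main obstacle, such as it is, lies in the bookkeeping of the two identities $P^\circ = |G|^{-1} \sum_\sigma P_\sigma$ and $\pi_{P_\sigma}(C) = \pi_P(\sigma^{-1}C)$; both are genuinely consequences of $G$ acting both on lists and committees and commuting appropriately with the containment relation $\ell \subseteq C$. Once these are in hand, the averaging-versus-max step is the standard observation that the maximum of a function is at least the average of its values over any orbit, applied in reverse to the symmetrized distribution.
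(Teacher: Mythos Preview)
Your symmetrization argument is correct and quite clean. The key identities $P^\circ = |G|^{-1}\sum_{\sigma\in G} P_\sigma$ and $\pi_{P_\sigma}(C)=\pi_P(\sigma^{-1}C)$ hold for exactly the reasons you give, and the final averaging-versus-maximum step is valid.

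The paper takes a different, more computational route. It partitions the set of committees into classes $\mathcal{C}_m$ according to $|C\cap v|=j-m$, and then, by explicit binomial-coefficient counting, shows two things: (a) the average of $\pi_P$ over $\mathcal{C}_m$ equals $\sum_r w_r b_{r,m}$ for certain constants $b_{r,m}$ depending only on $n,k,j,r,m$; and (b) under the concentric distribution $P^\circ$, every committee in $\mathcal{C}_m$ has $\pi_{P^\circ}$ equal to this same quantity. Thus within each $\mathcal{C}_m$ some committee under $P$ does at least as well as every committee under $P^\circ$, and taking the maximum over $m$ finishes. In effect the paper is doing your group-averaging by hand, orbit by orbit on the committee side (the $\mathcal{C}_m$ are precisely the $G$-orbits on committees).

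Your approach is more conceptual and avoids all of the binomial bookkeeping. What the paper's approach buys, however, is the explicit formula for $b_{r,m}$; these constants are immediately reused in the next lemma and in the proof of the main theorem, where one needs to compare $b_{r,m}$ with $b_{r,m+1}$ and $b_{r,0}$ with $b_{r+1,0}$. So if you adopt your argument for this lemma, you would still need to compute $\pi_{P^\circ}(C)$ explicitly for $C\in\mathcal{C}_m$ afterward in order to proceed.
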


In other words, uniformizing the voter distribution concentrically over the rings centered at $v$ can only decrease the popularity of a most popular committee.

\begin{proof}
We will treat the weights as fixed for now, and partition the set of committees into classes by the number of elements a committee differs from the central list $v$.  We'll show that the approval proportion $\pi_{P}$ dominates $\pi_{P^{\circ}}$ in each class.

Let $\mathcal{C}_{m}$ denote the set of committees that differ from the list $v$ in exactly $m$ candidates, i.e., $m$ candidates are ``missing''. 
Thus
$$
\mathcal{C}_{m} = \{ C \subseteq [n] \, : \, |C| = k, |C \cap v| = j-m\}.
$$
For instance, $\mathcal{C}_{0}$ consists of all committees that contain the list $v$.

Note that $m \leq n-k$, because any $k$-person committee in $\mathcal{C}_{m}$ is missing $n-k$ candidates, 
including by construction exactly $m$ candidates in $v$.  We also see $m \leq j$, since $v$ has only $j$ candidates.
Every committee belongs to exactly one $\mathcal{C}_{m}$ for some $m$, and there are $\binom{j}{j-m} \binom{n-j}{k+m-j}$ committees in $\mathcal{C}_{m}$.

\begin{figure}
\includegraphics{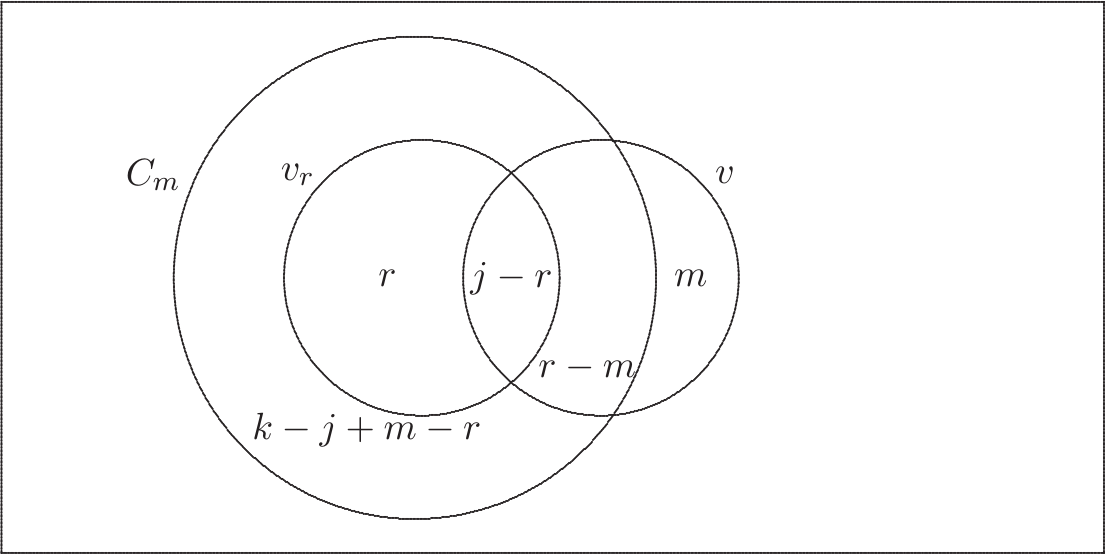}
\caption{Committee $C_{m}$ satisfies list $v_{r}$, which is distance $r$ from list $v$.}
\label{fig:vrdiagram}
\end{figure}


We can count the number of committees in $\mathcal{C}_{m}$ that contain a fixed list $v_{r}$ in the ring $R_{r}(v)$.
A list $v_{r} \in R_{r}(v)$ that is contained in a committee $C \in \mathcal{C}_{m}$ can be visualized as in Figure $\ref{fig:vrdiagram}$, which is labeled with the sizes of the various subsets involved.
To construct such a committee $C$, we must choose $r-m$ elements from the $r$ elements in $v \setminus v_r$, and $k - j + m -r$ elements from the $n-j-r$ elements not in $v \cup v_{r}$. Thus there are $\binom{r}{r-m} \binom{n-j-r}{k-j+m-r}$ such committees.

Then consider:
$$\sum_{C \in \mathcal{C}_{m}} \pi_{P}( C)
= \sum_{C \in \mathcal{C}_{m}} \sum_{\ell \subseteq C, |\ell|=j} P(\ell).
$$
Note that $P(\ell)$ appears multiple times in this sum, once for every instance where a committee $C$ in $\mathcal{C}_{m}$ contains $\ell$.  The number of times that happens depends on what ring $\ell$ is in; for $\ell$ in $R_{r}(v)$, the number of committees in $\mathcal{C}_{m}$ that contain $\ell$ is $\binom{r}{r-m} \binom{n-j-r}{k-j+m-r}$ as calculated above.  So our sum $\sum_{C \in \mathcal{C}_{m}} \pi_{P}( C)$ now becomes:
\begin{eqnarray*}
\sum_{C \in \mathcal{C}_{m}} \pi_{P}( C)
&=& 
\sum_{r=m}^{D} \left[ \sum_{\ell \in R_{r}(v)} P(\ell) \right] \binom{r}{r-m} \binom{n-j-r}{k+m-j-r}\\
&=& \sum_{r=m}^{D} \quad  w_r \quad \binom{r}{r-m} \binom{n-j-r}{k+m-j-r},
\end{eqnarray*}
where $w_r$ is the weight of the ring $R_{r}(v)$.  The sum index starts at $r=m$ because all subsets of $\mathcal{C}_{m}$ by construction must differ from $v$ by at least $m$ elements, and the index ends at $D$, the diameter of the graph.

Then at least one of the committees $\widehat C \in \mathcal{C}_{m}$ must have approval proportion equal to or better than average, or in other words, 
\begin{eqnarray}
\pi_{P}(\widehat C) 
&\geq& 
\frac{1}{ \binom{j}{m} \binom{n-j}{k+m-j}}
\sum_{r=m}^{D} w_r \binom{r}{r-m} \binom{n-j-r}{k+m-j-r}  \nonumber\\
&=& \sum_{r=m}^{D} w_r \frac{r!}{(r-m)!} \frac{(n-j-r)!}{(k+m-j-r)!} \frac{(j-m)!}{j!} \frac{(k+m-j)!}{(n-j)!}. 
\label{eq:committeearbmin} 
\end{eqnarray}

Now, consider a similar analysis using concentrically distributed voter data $P^{\circ}$ centered at $v$ with the same ring weights $w_{r}$ as $P$.  Specifically, we assume that each list in $R_{r}(v)$ receives voting proportion 
$$\frac{w_r}{\binom{j}{j-r} \binom{n-j}{r}}.$$
Then, we note that a committee $C^{\circ}$ in $\mathcal{C}_{m}$ will contain $\binom{j-m}{j-r}\binom{k+m-j}{r}$ of the lists in the ring $R_{r}(v)$. 
Again referring to Figure \ref{fig:vrdiagram}, we see that a list that is in $R_{r}(v)$ must contain $j-r$ elements of $v$ chosen from the $j-m$ such elements in $C^{\circ}$, and must contain $r$ candidates not in $v$ chosen from the $k+m-j$ such candidates in $C^{\circ}$.

Thus, if $C^{\circ} \in \mathcal{C}_{m}$, we have
\begin{eqnarray}
\pi_{P^{\circ}}(C^{\circ}) 
&=& \sum_{r=m}^{D} w_r \frac{\binom{j-m}{j-r}\binom{k+m-j}{r}}{\binom{j}{r} \binom{n-j}{r}} \nonumber \\
&=& \sum_{r=m}^{D} w_r \frac{(j-m)!}{(r-m)!} \frac{(k+m-j)!}{(k+m-j-r)!} \frac{r!}{j!} \frac{(n-j-r)!}{(n-j)!}. \label{eq:committeetotalpm}
\end{eqnarray}
So every committee in $\mathcal{C}_{m}$ has the same approval proportion under $P^{\circ}$.  
Comparing the final expressions in $\eqref{eq:committeearbmin}$ and $\eqref{eq:committeetotalpm}$, we see that both are equal to
$$\sum_{r=m}^{D} w_r b_{r,m}$$
where 
$$
b_{r,m} = \frac{r!}{(r-m)!} \frac{(k+m-j)!}{(k+m-j-r)!} \frac{(j-m)!}{j!} \frac{(n-j-r)!}{(n-j)!}.
$$
Thus 
$$\pi_{P}(\widehat C) \geq \pi_{P^{\circ}}(C^{\circ}).$$

That is, the approval proportion of a most popular committee in $\mathcal{C}_{m}$ under $P$ will equal or exceed the approval proportion of any  committee in $\mathcal{C}_{m}$ under $P^{\circ}$.  Then, taking the maximum over all $m$, we obtain the desired result.
\end{proof}

The numbers $b_{r,m}$ are integral to comparing the number of lists contained in various committees. Most important are their relative sizes for a fixed value of $r$.

\begin{lemma} We have $r \leq j \left[ 1 - \frac{j-m}{k+1}\right]$ (using the notation of Lemma \ref{lem:ringlemma}) if and only if
$b_{r,m} \geq b_{r,m+1}$. 
\label{lem:ineqlemma} 
\end{lemma}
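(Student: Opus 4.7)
The plan is to obtain the inequality by direct computation of the ratio $b_{r,m+1}/b_{r,m}$. Of the six factorial factors making up $b_{r,m}$, only four depend on $m$: namely $(r-m)!$, $(k+m-j)!$, $(k+m-j-r)!$, and $(j-m)!$. So when I form the ratio, each of these contributes either a single factor or a single reciprocal, and the computation collapses to
$$\frac{b_{r,m+1}}{b_{r,m}} = \frac{(r-m)\,(k+m+1-j)}{(j-m)\,(k+m-j-r+1)}.$$
The claim $b_{r,m} \geq b_{r,m+1}$ is thus equivalent to
$$(r-m)(k+m+1-j) \leq (j-m)(k+m-j-r+1). \qquad (*)$$

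Next I would expand both sides of $(*)$ and observe how many cross terms cancel. After multiplying out, the mixed terms $rm$ and $mr$, as well as $-rj$ and $-jr$, and the constants $-mk$, $-m^2$, and $-m$, all appear on both sides and drop out. What remains simplifies to
$$k(r-j) + (r-j) + j(j-m) \leq 0,$$
i.e., $(k+1)(j-r) \geq j(j-m)$, which upon dividing by $k+1$ and rearranging gives exactly
$$r \leq j\left[1 - \frac{j-m}{k+1}\right].$$
Since each step in the derivation is reversible (the denominators $j-m$ and $k+m-j-r+1$ are positive for the meaningful range $m < j$ and $r \leq k+m-j$), this establishes the ``if and only if'' claim.

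The only real obstacle here is bookkeeping: making sure that each factorial in $b_{r,m}$ is paired correctly with its counterpart in $b_{r,m+1}$, and that the expansion of $(*)$ is done cleanly so that the cancellations are visible. There is no combinatorial content beyond the algebra, so I would present the argument in the order above — state the ratio formula, rewrite the desired inequality as $(*)$, and then give the one-line simplification that produces the bound on $r$.
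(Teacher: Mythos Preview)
Your argument is correct and follows essentially the same route as the paper: both reduce $b_{r,m}\ge b_{r,m+1}$ to the single inequality $(j-m)(k+m+1-j-r)\ge (r-m)(k+m+1-j)$ and then simplify to the stated bound on $r$. Your presentation is a bit more explicit (forming the ratio and tracking the positivity of the denominators), but the content is identical.
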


As an example of this, if $m=0$ and $j$ is approximately half of $k$, then we obtain the condition that $r$ is less than half of $j$.

\begin{proof}
We start with the inequality $b_{r,m} \geq b_{r,m+1}$. After expanding the binomial coefficients and canceling common terms, we see this is equivalent to
$$
(j-m)(k+m+1-j-r) \geq (r-m)(k+m+1-j)
$$
which is equivalent to $r \leq \frac{j(k+1 +m-j)}{k+1}$, as desired.
\end{proof}

\section{Main Theorem}
We are now in a position to prove our main theorem.  We now assume that the voter distribution $P$ is supported in some ball $B_{\rho}(v)$ centered at a list $v$, with $\rho < D$ so that $B_{\rho}(v)$ is not the entire Johnson graph.

\begin{theorem}
\label{theorem} 
Let $v = \{1,2, \ldots j\}$ and consider a voter distribution $P$ supported on $B_{\rho}(v)$ in the Johnson graph.  
If $\rho \leq j \left[ 1 - \frac{j}{k+1}\right]$, then there exists a committee that satisfies at least $\frac{\binom{k-j}{\rho}}{\binom{n-j}{\rho}}$ of the voters. \end{theorem}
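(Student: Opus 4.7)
The plan is to combine Lemma~\ref{lem:ringlemma} with a focused calculation at $m=0$.  By Lemma~\ref{lem:ringlemma}, it is enough to produce a committee meeting the bound when $P$ is replaced by its concentric analogue $P^{\circ}$ (same ring weights $w_0,\dots,w_D$); the support assumption forces $w_r=0$ for $r>\rho$.  I then restrict attention to the family $\mathcal{C}_0$ of committees containing $v$.  By~\eqref{eq:committeetotalpm}, each such committee has the common approval proportion
$$
\pi_{P^{\circ}}(C) \;=\; \sum_{r=0}^{\rho} w_r\, b_{r,0},
$$
and a direct simplification of the definition of $b_{r,m}$ at $m=0$ gives the clean form
$$
b_{r,0} \;=\; \frac{\binom{k-j}{r}}{\binom{n-j}{r}}.
$$

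The key step is to show that $b_{r,0}$ is non-increasing in $r$ for $0\le r\le k-j$; a short computation gives the ratio
$$
\frac{b_{r,0}}{b_{r+1,0}} \;=\; \frac{n-j-r}{k-j-r} \;\ge\; 1,
$$
using only $n\ge k$.  The hypothesis $\rho\le j[1-j/(k+1)]$ guarantees $\rho\le k-j$: indeed, $(k-j+1)(k+1)-j(k+1-j)=(k-j+1)^2>0$, so $j(k+1-j)/(k+1)<k-j+1$, and hence any integer $\rho$ satisfying the hypothesis lies in the range where the monotonicity applies and $b_{\rho,0}$ is positive.  Combining monotonicity with $\sum_{r=0}^{\rho}w_r=1$,
$$
\pi_{P^{\circ}}(C) \;=\; \sum_{r=0}^{\rho} w_r\, b_{r,0} \;\ge\; b_{\rho,0} \;=\; \frac{\binom{k-j}{\rho}}{\binom{n-j}{\rho}}
$$
for every $C\in\mathcal{C}_0$, completing the proof in light of Lemma~\ref{lem:ringlemma}.

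The main obstacle is choosing the right family $\mathcal{C}_m$ to analyze.  A first instinct is to take $m=\rho$ so that the target committee sits ``farthest'' from $v$, but that route requires Lemma~\ref{lem:ineqlemma} to rule out gains at smaller $m$ and invites messier bookkeeping.  Working with $m=0$ sidesteps the complication entirely: the hypothesis on $\rho$ enters only as an integrality argument forcing $b_{\rho,0}>0$, while Lemma~\ref{lem:ineqlemma} is reserved for verifying sharpness---the uniform distribution on $R_\rho(v)$ achieves the bound because under the hypothesis that lemma ensures the resulting best committee indeed lies in $\mathcal{C}_0$.
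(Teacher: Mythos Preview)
Your proof is correct and follows the same skeleton as the paper's---reduce to a concentric distribution via Lemma~\ref{lem:ringlemma}, then analyze committees in $\mathcal{C}_0$ using the monotonicity of $b_{r,0}$ in $r$.  The difference is that the paper first invokes Lemma~\ref{lem:ineqlemma} to show that, under the hypothesis on $\rho$, the committees in $\mathcal{C}_0$ dominate those in every $\mathcal{C}_m$ for the concentric distribution, and only then minimizes $\pi_{P^\circ}(C_0)$ over the weights $w_r$.  You observe, correctly, that this comparison is unnecessary for the lower bound: since the theorem only asks for \emph{some} committee meeting the bound, the $\mathcal{C}_0$ calculation alone suffices.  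Your route is therefore more direct, and in fact uses the hypothesis $\rho\le j[1-j/(k+1)]$ only through its consequence $\rho\le k-j$ (ensuring $b_{\rho,0}>0$); the paper's argument uses the full strength of the hypothesis via Lemma~\ref{lem:ineqlemma}, which---as you rightly point out---is really what certifies sharpness rather than the bound itself.
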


\begin{proof}
Since we are trying to minimize the proportion of voters satisfied by a most popular committee, 
Lemma \ref{lem:ringlemma} shows that we may as well assume the voting distribution is concentric, which we do from now on. 
Now, for each $m$, let $C_{m}$ be any committee in $\mathcal{C}_{m}$. Then the expression given in \eqref{eq:committeetotalpm} gives the proportion of voters satisfied by this committee in terms of the $w_r$. The maximum over all $m$ is an optimal committee for the given profile, so our goal is to minimize that maximum value. 
But, under our assumptions, we see that
$$
r \leq \rho \leq j \left[ 1 - \frac{j}{k+1}\right] \leq \frac{j(k+1+m-j)}{k+1}.
$$

Then Lemma \ref{lem:ineqlemma} shows that $b_{r,m} \geq b_{r,m+1}$. Thus, under our assumptions for any fixed values of $w_r$, we always have 
$$
\sum_{r=m}^{\rho} w_r b_{r,m} \geq \sum_{r=m}^{\rho} w_r b_{r,m+1},
$$
since the weights $w_r$ are non-negative. Thus we see that $C_{m}$ is more widely approved of than $C_{m+1}$, and $C_{0}$ is always a most-approved committee.

Then we have to choose the $w_r$ to minimize the value of $\sum_{r=0}^{\rho} w_rb_{r,0}$. But
\begin{eqnarray*}
b_{r,0} 
&=& \frac{(k-j)!}{(k-j-r)!} \frac{(n-j-r)!}{(n-j)!}\\
&=& \frac{(k-j)!}{(n-j)!} \cdot (n-j-r)(n-j-r-1) \ldots (k-j-r+1).
\end{eqnarray*}
But there are always $n-k$ terms in $(n-j-r)(n-j-r-1) \ldots (k-j-r+1)$, beginning at $n-j-r$. Thus, $b_{0,0} \geq b_{1,0} \geq \ldots$, and the minimum value of $\pi(C_{0}) = \sum_{r=0}^{\rho} w_r b_{r,0}$ is achieved by letting $w_{\rho} = 1$, and letting every other $w_r$ be $0$. In this case, the expression in \eqref{eq:committeetotalpm} gives the desired lower bound. 
\end{proof}

\section{Extensions}

There are some natural extensions of Theorem \ref{theorem} to other situations that may be of interest.
The first concerns what we can guarantee if only a fraction of the voters submit votes in a neighborhood $B_\rho(v)$.

\begin{corollary}
Let $\rho \leq j \left[ 1 - \frac{j}{k+1}\right]$.  Suppose $P$ is a voter distribution in which a proportion $\alpha$ of the voters select lists in a ball $B_{\rho}(v)$, e.g., $P(B_{\rho}(v))=\alpha$.  Then there exists a committee $C$ with approval proportion
$$\pi_{P}( C) \geq \frac{ \binom{k-j}{\rho}}{\binom{n-j}{\rho}} \cdot \alpha.$$
\end{corollary}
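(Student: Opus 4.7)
The plan is to reduce this corollary to a direct application of Theorem \ref{theorem} by conditioning on the event that a voter's list lies in the ball $B_\rho(v)$. If $\alpha = 0$ the claim is vacuous, so assume $\alpha > 0$. I would first define a new voter distribution $P'$ supported on $B_\rho(v)$ by renormalizing the restriction of $P$ to the ball:
\[
P'(\ell) = \begin{cases} P(\ell)/\alpha & \text{if } \ell \in B_\rho(v), \\ 0 & \text{otherwise.} \end{cases}
\]
Since $P(B_\rho(v)) = \alpha$, this $P'$ is a genuine probability distribution, and it is supported on $B_\rho(v)$. Thus Theorem \ref{theorem} applies to $P'$ under the stated hypothesis $\rho \leq j\left[1 - \tfrac{j}{k+1}\right]$.

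Next, I would invoke Theorem \ref{theorem} to produce a committee $C$ satisfying $\pi_{P'}(C) \geq \binom{k-j}{\rho}/\binom{n-j}{\rho}$. The key observation is then a simple monotonicity of approval proportions: for any committee $C$,
\[
\pi_P(C) = \sum_{\ell \subseteq C,\, |\ell|=j} P(\ell) \;\geq\; \sum_{\substack{\ell \subseteq C,\, |\ell|=j \\ \ell \in B_\rho(v)}} P(\ell) \;=\; \alpha \sum_{\substack{\ell \subseteq C,\, |\ell|=j \\ \ell \in B_\rho(v)}} P'(\ell) \;=\; \alpha\, \pi_{P'}(C),
\]
where the final equality uses the fact that $P'$ vanishes outside $B_\rho(v)$. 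Combining these two inequalities for the particular committee $C$ delivered by Theorem \ref{theorem} gives
\[
\pi_P(C) \;\geq\; \alpha \cdot \frac{\binom{k-j}{\rho}}{\binom{n-j}{\rho}},
\]
which is exactly the desired bound.

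There is no real obstacle here; the content of the corollary is entirely in Theorem \ref{theorem}, and the remaining work is just the bookkeeping of passing between $P$ and its conditional distribution $P'$ on $B_\rho(v)$. The only point worth flagging is that the out-of-ball mass $(1-\alpha)$ can only help, never hurt, the approval proportion of the winning committee, which is what makes the one-sided inequality $\pi_P(C) \geq \alpha\, \pi_{P'}(C)$ valid without any further assumption on how the remaining $(1-\alpha)$ of the voters distribute their lists.
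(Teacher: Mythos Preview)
Your proposal is correct and follows essentially the same approach as the paper: restrict attention to the voters whose lists lie in $B_\rho(v)$, apply Theorem~\ref{theorem} to that subpopulation, and then observe that satisfying a fraction of those voters accounts for at least $\alpha$ times that fraction of the full electorate. Your write-up is simply a more explicit version of this argument, making the conditional distribution $P'$ and the inequality $\pi_P(C)\ge \alpha\,\pi_{P'}(C)$ precise.
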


\begin{proof}
This follows from applying Theorem \ref{theorem} to just those voters whose votes lie in $B_\rho(v)$.  
We see there exists a committee
that satisfies $\frac{\binom{k-j}{\rho}}{\binom{n-j}{\rho}}$ of those voters, which make up $\frac{ \binom{k-j}{\rho}}{\binom{n-j}{\rho}} \cdot \alpha$ of the entire set of voters.
\end{proof}

We can also consider slightly more general election procedures.  Suppose in choosing a $k$-member committee from a pool of $n$ candidates, that we allow voters to submit a smaller unranked list between $1$ and $j$ candidates that they would prefer to have on the committee.  If these votes are sufficiently similar to $v$ in a sense that we make precise below, then we can obtain a result like (and because of) Theorem \ref{theorem}.

\begin{corollary}
Consider a distribution of votes over lists of size less than or equal to $j$, in such a way that each submitted list is a subset of some list in a ball $B_{\rho}(v)$ of radius $\rho \leq j \left[ 1 - \frac{j}{k+1}\right]$ in the Johnson graph.  Then there exists a committee that satisfies at least $\frac{\binom{k-j}{\rho}}{\binom{n-j}{\rho}}$ of the voters. 
\end{corollary}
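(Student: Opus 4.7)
My plan is to reduce the corollary to Theorem \ref{theorem} by extending each short submitted list to a full $j$-list lying in $B_{\rho}(v)$, noting that this extension only makes voters harder to satisfy, not easier.

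More concretely, I would proceed in the following steps. First, for every voter whose submitted list $\ell'$ has size $j' \leq j$, invoke the hypothesis to fix a particular $j$-list $\ell \in B_{\rho}(v)$ with $\ell' \subseteq \ell$ (the choice is arbitrary; just pick one such extension for each voter). This defines an auxiliary voter distribution $P'$ on the set of $j$-lists, supported entirely on $B_{\rho}(v)$, having the same number of voters as the original distribution $P$.

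Next, apply Theorem \ref{theorem} to $P'$. Since $\rho \leq j\left[1 - \frac{j}{k+1}\right]$ and $P'$ is supported on $B_{\rho}(v)$, there exists a committee $C$ that is approved by at least
$$\frac{\binom{k-j}{\rho}}{\binom{n-j}{\rho}}$$
of the voters under $P'$. Finally, I would observe the key monotonicity: if a voter is $P'$-satisfied by $C$, meaning their assigned $j$-list $\ell$ satisfies $\ell \subseteq C$, then their original shorter list $\ell'$ also satisfies $\ell' \subseteq \ell \subseteq C$, so that voter is satisfied under the original voting procedure as well. Hence the same committee $C$ satisfies at least $\frac{\binom{k-j}{\rho}}{\binom{n-j}{\rho}}$ of the original voters.

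I do not anticipate any real obstacle here, since the corollary is essentially a direct reduction: the content lies in the observation that enlarging a submitted list can only shrink the set of approving committees, so padding short votes up to full $j$-lists produces a harder instance to which Theorem \ref{theorem} applies. The only mild subtlety is that voters who submitted the same short list might be padded to different $j$-lists, but this is harmless because we are only using $P'$ as a tool to invoke Theorem \ref{theorem}, and the final approval count is interpreted back in the original problem by the inclusion $\ell' \subseteq \ell$.
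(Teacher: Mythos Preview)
Your proposal is correct and follows essentially the same approach as the paper: extend each short list to a full $j$-list in $B_{\rho}(v)$, observe that this can only decrease the set of committees satisfying each voter, and then invoke Theorem \ref{theorem} on the resulting distribution. If anything, your write-up is slightly more careful in distinguishing the original list $\ell'$ from its extension $\ell$ and in making the monotonicity step $\ell' \subseteq \ell \subseteq C$ explicit.
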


\begin{proof}
We can convert such a distribution on subsets of size $\leq j$ into one on $j$-sets satisfying the assumptions in Theorem \ref{theorem}, in a way that can only decrease the number of satisfied voters. To see this, consider any voter who submits a list $v$ containing fewer than $j$ candidates. We replace that list with a list $v'$ in $B_{\rho}(v)$ that contains $v$ as a subset. (Note that a voter who submitted $v'$ would be satisfied by strictly fewer committees than a voter who submitted the list $v$.) Then Theorem \ref{theorem} will apply to the resulting voting distribution, and because our alterations might only have decreased the number of satisfied voters, the lower bound still holds.
\end{proof}

One may also consider elections using \emph{thresholds} as described in \cite{fishburn-pekec}, and ask whether our methods would extend in that context, in which a voter submitting a list $\ell$ approves a committee $C$ if $C$ contains sufficiently many members of $\ell$.  Our main result is the special case that each voter only approves committees that contain \emph{all} $j$ of the candidates from their list.
If voters approve committees that contain at least $s$ of the candidates from their list, for some $s$ between $0$ and $j$, then the analogous result to Lemma \ref{lem:ringlemma} holds and can be proved in a similar way.  However, the analogues of the numbers $b_{r,m}$ are significantly more complicated, making the development of a result similar to Lemma \ref{lem:ineqlemma} a barrier to proving a generalization of the main theorem.




\begin{thebibliography}{10}

\bibitem{berg}
Deborah~E. Berg, Serguei Norine, Francis~Edward Su, Robin Thomas, and Paul
  Wollan.
\newblock Voting in agreeable societies.
\newblock {\em Amer. Math. Monthly}, 117(1):27--39, 2010.

\bibitem{brams-etal}
Steven~J. Brams, D.~Marc Kilgour, and M.~Remzi Sanver.
\newblock A minimax procedure for electing committees.
\newblock {\em Public Choice}, 132:401--420, 2007.

\bibitem{brouwer-etal}
A.~E. Brouwer, A.~M. Cohen, and A.~Neumaier.
\newblock {\em Distance-regular graphs}, volume~18 of {\em Ergebnisse der
  Mathematik und ihrer Grenzgebiete (3) [Results in Mathematics and Related
  Areas (3)]}.
\newblock Springer-Verlag, Berlin, 1989.

\bibitem{carlson-oneill}
Rosalie Carlson, Stephen Flood, Kevin O'Neill, and Francis~Edward Su.
\newblock A {T}ur\'an-type problem for circular arc graphs.
\newblock {\em Preprint}, 2010.
\newblock Available at {\tt arXiv:1110.4205}.

\bibitem{eschenfeldt}
Patrick Eschenfeldt.
\newblock Approval voting in box societies.
\newblock Harvey Mudd College Senior Thesis, 2012.

\bibitem{fishburn}
Peter~C. Fishburn.
\newblock An analysis of simple voting systems for electing committees.
\newblock {\em SIAM J. Appl. Math.}, 41(3):499--502, 1981.

\bibitem{fishburn-pekec}
Peter~C. Fishburn and Aleksandar Peke\v{c}.
\newblock Approval voting for committees: Threshhold approaches.
\newblock {\em Preprint}, 2004.
\newblock Available at
  http://dimacs.rutgers.edu/Workshops/DecisionTheory2/PekecFishburn04a.pdf.
  Cited with permission on May 31, 2013.

\bibitem{fletcher}
Sarah Fletcher.
\newblock Exploring agreeability in tree societies.
\newblock Harvey Mudd College Senior Thesis, 2009.

\bibitem{gehrlein}
William~V. Gehrlein.
\newblock The {C}ondorcet criterion and committee selection.
\newblock {\em Math. Social Sci.}, 10(3):199--209, 1985.

\bibitem{hardin}
Christopher~S. Hardin.
\newblock Agreement in circular societies.
\newblock {\em Amer. Math. Monthly}, 117(1):40--49, 2010.

\bibitem{kilgour-marshall}
D.Marc Kilgour and Erica Marshall.
\newblock Approval balloting for fixed-size committees.
\newblock In Dan~S. Felsenthal and Mosh̩ Machover, editors, {\em Electoral
  Systems}, Studies in Choice and Welfare, pages 305--326. Springer Berlin
  Heidelberg, 2012.

\bibitem{nyman}
Maria Klawe, Kathryn Nyman, Jacob Scott, and Francis~Edward Su.
\newblock Double-interval societies.
\newblock {\em To appear, this volume}.

\bibitem{ratliff-2003}
Thomas~C. Ratliff.
\newblock Some startling inconsistencies when electing committees.
\newblock {\em Soc. Choice Welf.}, 21(3):433--454, 2003.

\bibitem{ratliff-2006}
Thomas~C. Ratliff.
\newblock Selecting committees.
\newblock {\em Public Choice}, 126:343--355, 2006.

\end{thebibliography}
\end{document}